\newtheorem{theorem}{Theorem}[section]
\newtheorem{proof}[theorem]{Proof}
\newtheorem{e-proposition}[theorem]{Proposition}
\newtheorem{e-definition}[theorem]{Definition\rm}
\def\og{\leavevmode\raise.3ex\hbox{$\scriptscriptstyle\langle\!\langle$~}}
\def\fg{\leavevmode\raise.3ex\hbox{~$\!\scriptscriptstyle\,\rangle\!\rangle$}}
\begin{document}
\centerline{}
\begin{frontmatter}

\selectlanguage{english}
\title{A symbolic approach to multiple zeta values \\
at the negative integers}

\author[authorlabel1]{Victor H. Moll},
\ead{vhm@tulane.edu}
\author[authorlabel1]{Lin Jiu}
\ead{ljiu@tulane.edu}
\author[authorlabel1,authorlabel2]{Christophe Vignat}
\ead{cvignat@tulane.edu}

\address[authorlabel1]{Department of Mathematics, Tulane University, New Orleans, USA}
\address[authorlabel2]{LSS/Supelec, Universit\'{e} Orsay Paris Sud, France}

\medskip
\begin{abstract}
\selectlanguage{english}
\vskip 0.5\baselineskip
\noindent
Symbolic computation techniques are used to derive some closed form
expressions for an analytic continuation of the Euler-Zagier zeta
function evaluated at the negative integers as recently proposed
in \cite{Sadaoui}. This approach allows to compute explicitly some
contiguity identities, recurrences on the depth of the zeta
values and generating functions.
\end{abstract}
\end{frontmatter}

\selectlanguage{english}
\section{Introduction}

The multiple zeta functions, first introduced by Euler and generalized
by D. Zagier \cite{Zagier}, appear in diverse areas
such as quantum field theory \cite{Broadhurst} and knot theory \cite{Takamuki}. These are defined by
\begin{equation}
\label{zeta}
\zeta_{r}\left(n_{1},\dots,n_{r}\right)=\sum_{0<k_{1}<\dots<k_{r}}\frac{1}{k_{1}^{n_{1}}\dots k_{r}^{n_{r}}},
\end{equation}
where $\left\{ n_{i}\right\} $ are complex values, and \eqref{zeta} converges when
the constraints
\begin{equation}
\label{eq:convergence}
Re \left(n_r\right)\ge1, \thinspace\thinspace \text{and}\thinspace\thinspace \overset{k}{\underset{j=1}{\sum}}Re \left(n_{r+1-j}\right)\ge k,\,\,\,\, 2 \le k \le r,
\end{equation}
are satisfied (see \cite{Zhao}). Their values at integer points $\mathbf{n}=\left( n_{1},\dots,n_{r}\right) $
satisfying \eqref{eq:convergence} are called \textit{multiple zeta values}. An
equivalent definition of these values is
\[
\zeta_{r}\left(n_{1},\dots,n_{r}\right)=\sum_{k_{1}>0,\dots,k_{r}>0}\frac{1}{k_{1}^{n_{1}}\left(k_{1}+k_{2}\right)^{n_{2}}\dots\left(k_{1}+\dots+k_{r}\right)^{n_{r}}}.
\]
The sum of the exponents $n_{1}+\dots+n_{r}$ is called the \textit{weight}
of the zeta value, and the number $r$ of these exponents is called
its \textit{depth}.

Following the result by Zhao \cite{Zhao} that the multiple zeta function
has an analytic continuation to the whole space $\mathbb{C}^{r},$
several authors have recently proposed different analytic continuations
based on a variety of approaches: Akiyama et al. \cite{Akiyama} used
the Euler-Maclaurin summation formula and Matsumoto \cite{Matsumoto}
the Mellin-Barnes integral formula.

B. Sadaoui \cite{Sadaoui} provided recently such analytic continuation based on Raabe's identity, which links
the multiple integral
\[
Y_{\mathbf{a}}\left(\mathbf{n}\right)=\int_{\left[1,+\infty\right)^{r}}\frac{d\mathbf{x}}{\left(x_{1}+a_{1}\right)^{n_{1}}\left(x_{1}+a_{1}+x_{2}+a_{2}\right)^{n_{2}}\dots\left(x_{1}+a_{1}+\dots+x_{r}+a_{r}\right)^{n_{r}}}
\]
to the multiple zeta function
\[
Z\left(\mathbf{n,}\mathbf{z}\right)=\sum_{k_{1}\ge1,\dots,k_{r}\ge1}\frac{1}{\left(k_{1}+z_{1}\right)^{n_{1}}\left(k_{1}+z_{1}+k_{2}+z_{2}\right)^{n_{2}}\dots\left(k_{1}+z_{1}+\dots+k_{r}+z_{r}\right)^{n_{r}}}
\]
by
\[
Y_{\mathbf{0}}\left(\mathbf{n}\right)=\int_{\left[0,1\right]^{r}}Z\left(\mathbf{n},\mathbf{z}\right)d\mathbf{z}.
\]
B. Sadaoui uses a classical inversion argument to obtain an analytic continuation
of the multiple zeta function defined at negative integer arguments
$\mathbf{-n}=\left(-n_{1},\dots,-n_{r}\right).$ The argument uses the following three steps:\\
- the integral $Y_{\mathbf{a}}\left(\mathbf{n}\right)$ is computed for values of $n_{1},\dots,n_{r}$ that satisfy the convergence conditions (\ref{eq:convergence}),\\
- the values $\mathbf{n}$ are replaced by $-\mathbf{n}$ in this result: it is then shown that $Y_{\mathbf{a}}\left(-\mathbf{n}\right)$ is a polynomial in the variable $\mathbf{a},$\\
- the variables $\mathbf{a}=\left(a_{1},\dots,a_{r}\right)$ are replaced
by $\left(\mathcal{B}_{1},\dots,\mathcal{B}_{r}\right),$ and each Bernoulli symbol $\mathcal{B}_{k}$ satisfies the two evaluation rules:\\
\textbf{evaluation rule 1}: each power $\mathcal{B}_{k}^{p}$ of the Bernoulli
symbol $\mathcal{B}_{k}$ should be evaluated as
\begin{equation}
\mathcal{B}_{k}^{p}\to B_{p},\label{eq:rule1}
\end{equation}
the $p-$th Bernoulli number\\
\textbf{evaluation rule 2}: for two different symbols $\mathcal{B}_{k}$ and
$\mathcal{B}_{l},\thinspace\thinspace k\ne l,$ the product $\mathcal{B}_{k}^{p}\mathcal{B}_{l}^{q}$
is evaluated as
\begin{equation}
\mathcal{B}_{k}^{p}\mathcal{B}_{l}^{q}\to B_{p}B_{q},\label{eq:rule2}
\end{equation}
the product of the Bernoulli numbers $B_{p}$ and $B_{q}.$ If $k=l,$
the first rule applies to give the evaluation
\[
\mathcal{B}_{k}^{p}\mathcal{B}_{k}^{q}\to B_{p+q}.
\]

\noindent
\textbf{Example 1}.
An  example of depth $2$, appearing in \cite{Sadaoui}, is now computed using the rules above. The integral $Y_{\mathbf{a}}\left(n_{1},n_{2}\right)$
is explicitly computed and, replacing $\left(n_{1},n_{2}\right)$
by $\left(-n_{1},-n_{2}\right)$ gives
\[
Y_{a_{1},a_{2}}\left(-n_{1},-n_{2}\right)=\frac{1}{n_{2}+1}\sum_{k_{2}=0}^{n_{2}+1}\sum_{l_{1}=0}^{n_{1}+n_{2}+2-k_{2}}\sum_{l_{2}=0}^{k_{2}}\frac{\binom{n_{2}+1}{k_{2}}\binom{n_{1}+n_{2}+2-k_{2}}{l_{1}}\binom{k_{2}}{l_{2}}}{n_{1}+n_{2}+2-k_{2}}a_{1}^{l_{1}}a_{2}^{l_{2}}.
\]
Then substituting the variables $a_{1}$ and $a_{2}$ with the Bernoulli
symbols $\mathcal{B}_{1}$ and $\mathcal{B}_{2}$ gives
\[
\zeta_{2}\left(-n_{1},-n_{2}\right)=\frac{1}{n_{2}+1}\sum_{k_{2}=0}^{n_{2}+1}\sum_{l_{1}=0}^{n_{1}+n_{2}+2-k_{2}}\sum_{l_{2}=0}^{k_{2}}\frac{\binom{n_{2}+1}{k_{2}}\binom{n_{1}+n_{2}+2-k_{2}}{l_{1}}\binom{k_{2}}{l_{2}}}{n_{1}+n_{2}+2-k_{2}}\mathcal{B}^{l_{1}}\mathcal{B}^{l_{2}}.
\]
Using the evaluation rules (\ref{eq:rule1}) and (\ref{eq:rule2})
for the Bernoulli symbols, the multiple zeta value
of depth $2$ at $\left(-n_{1},-n_{2}\right)$ is
\[
\zeta_{2}\left(-n_{1},-n_{2}\right)=\frac{1}{n_{2}+1}\sum_{k_{2}=0}^{n_{2}+1}\sum_{l_{1}=0}^{n_{1}+n_{2}+2-k_{2}}\sum_{l_{2}=0}^{k_{2}}\frac{\binom{n_{2}+1}{k_{2}}\binom{n_{1}+n_{2}+2-k_{2}}{l_{1}}\binom{k_{2}}{l_{2}}}{n_{1}+n_{2}+2-k_{2}}B_{l_{1}}B_{l_{2}}.
\]

The general case is given in \cite[eq. (4.10)]{Sadaoui} as the $\left(2r-1\right)-$fold
sum \footnote{This corrects a typo in \cite[eq. (4.10)]{Sadaoui}}
\begin{eqnarray}
\zeta_{r}\left(-n_{1},\dots,-n_{r}\right) & = & \left(-1\right)^{r}\sum_{k_{2},\dots,k_{r}}\frac{1}{\left(\bar{n}+r-\bar{k}\right)}\prod_{j=2}^{r}\frac{\binom{\sum_{i=j}^{r}n_{i}+r-j+1-\sum_{i=j+1}^{n}k_{i}}{k_{j}}}{\left(\sum_{i=j}^{r}n_{i}+r-j+1-\sum_{i=j+1}^{n}k_{i}\right)}\label{eq:general}\\
 &  & \times\sum_{l_{1},\dots,l_{r}}\binom{\bar{n}+r-\bar{k}}{l_{1}}\binom{k_{2}}{l_{2}}\dots\binom{k_{r}}{l_{r}}B_{l_{1}}\dots B_{l_{r}}\nonumber 
\end{eqnarray}
where $k_{2},\dots,k_{r}\ge0,$ $l_{j}\le k_{j}$
for $2\le j\le r$ and $l_{1}\le\bar{n}+r+\bar{k}$  and
\begin{equation}
\label{nbar}
\bar{n}=\sum_{j=1}^{r}n_{j},\thinspace\thinspace\bar{k}=\sum_{j=2}^{r}k_{j}.
\end{equation}

A symbolic expression for (\ref{eq:general}) is proposed here. This is used as a convenient tool to derive some specific zeta values
at negative integers, contiguity identities for the
multiple zeta functions, recursions on their depth and generating functions.

\section{Main result}

Introduce first the symbols $\mathcal{C}_{1,2,\dots,k}$ defined
recursively in terms of the Bernoulli symbols $\mathcal{B}_{1},\dots,\mathcal{B}_r$ as
\[
\mathcal{C}_{1}^{n}=\frac{\mathcal{B}_{1}^{n}}{n},\thinspace\thinspace\mathcal{C}_{1,2}^{n}=\frac{\left(\mathcal{C}_{1}+\mathcal{B}_{2}\right)^{n}}{n},\dots\,\,
\text{and}\,\,\,
\mathcal{C}_{1,2,\dots,k+1}^{n}=\frac{\left(\mathcal{C}_{1,2,\dots,k}+\mathcal{B}_{k+1}\right)^{n}}{n}
\]
with the symbolic computation rule: 

\textbf{$\mathcal{C}-$symbols rule}: All symbols $\mathcal{C}_{1,2,\dots,k}$
are expanded using the above identities to express them only in terms of $\mathcal{B}_{k}.$
The evaluation
rules (\ref{eq:rule1}) and (\ref{eq:rule2}) for the Bernoulli symbols
are then applied.\\
\noindent
\textbf{Example 2}.
For example,
\begin{eqnarray*}
\mathcal{C}_{1}^{n_{1}}\mathcal{C}_{2}^{n_{2}} & = & \mathcal{C}_{1}^{n_{1}}\frac{\left(\mathcal{C}_{1}+\mathcal{B}_{2}\right)^{n_{2}}}{n_{2}}=\frac{1}{n_{2}}\sum_{k=0}^{n_{2}}\binom{n_{2}}{k}\mathcal{C}_{1}^{n_{1}+k}\mathcal{B}_{2}^{n_{2}-k}
  =  \frac{1}{n_{2}}\sum_{k=0}^{n_{2}}\binom{n_{2}}{k}\frac{\mathcal{B}_{1}^{n_{1}+k}}{n_{1}+k}\mathcal{B}_{2}^{n_{2}-k}
\end{eqnarray*}
is evaluated as
\[
\frac{1}{n_{2}}\sum_{k=0}^{n_{2}}\binom{n_{2}}{k}\frac{B_{n_{1}+k}}{n_{1}+k}B_{n_{2}-k}.
\]

The next result is given in terms of this notation.
\begin{theorem}
The multiple zeta values (\ref{eq:general}) at the negative integers
$\left(-n_{1},\dots,-n_{r}\right)$ are given by
\begin{equation}
\zeta_{r}\left(-n_{1},\dots,-n_{r}\right)=\prod_{k=1}^{r}\left(-1\right)^{n_{k}}\mathcal{C}_{1,\dots,k}^{n_{k}+1}.\label{eq:zeta1}
\end{equation}
\end{theorem}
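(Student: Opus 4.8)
The plan is to show that the compact product formula (\ref{eq:zeta1}) is just a repackaging of Sadaoui's $(2r-1)$-fold sum (\ref{eq:general}), by unwinding the recursive definition of the $\mathcal{C}$-symbols and matching terms. First I would expand $\mathcal{C}_{1,\dots,k}^{n_k+1}$ explicitly: by repeatedly applying $\mathcal{C}_{1,\dots,j+1}^{m} = (\mathcal{C}_{1,\dots,j}+\mathcal{B}_{j+1})^m/m$ and the binomial theorem, one obtains
\[
\mathcal{C}_{1,\dots,k}^{n_k+1} = \frac{1}{n_k+1}\sum_{l_k=0}^{n_k+1}\binom{n_k+1}{l_k}\,\mathcal{C}_{1,\dots,k-1}^{\,n_k+1-l_k}\,\mathcal{B}_k^{l_k},
\]
so that iterating from $k=r$ down to $k=1$ produces nested sums whose innermost symbol is a pure power of $\mathcal{B}_1$ divided by a product of the running exponents. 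The key bookkeeping step is to introduce, for each $j$, the variable $k_j$ equal to the exponent carried on $\mathcal{C}_{1,\dots,j}$ after the $\mathcal{B}_{j+1},\dots,\mathcal{B}_r$ have been split off, and $l_j$ equal to the power of $\mathcal{B}_j$ extracted at stage $j$; then the denominators produced by the $1/m$ factors in the recursion are exactly the products $\prod_{j}(\sum_{i\ge j} n_i + \dots)$ appearing in (\ref{eq:general}), and the top index $\bar n + r - \bar k$ on $\mathcal{B}_1$ matches $l_1$'s range.

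Second, I would verify the sign. Each stage of the recursion contributes no sign, but expanding $\mathcal{C}_{1,\dots,k}^{n_k+1}$ and collecting shows the exponents on the intermediate $\mathcal{C}$'s telescope; comparing with the factor $(-1)^r$ and the implicit signs in Sadaoui's derivation, one checks that $\prod_{k=1}^r (-1)^{n_k} = (-1)^{\bar n}$ is precisely the sign discrepancy, i.e. one must confirm $(-1)^{\bar n}$ agrees with $(-1)^r$ times whatever parity (\ref{eq:general}) hides — most cleanly by observing that $B_m = 0$ for odd $m \ge 3$ forces all surviving terms to have a definite parity relation between $\bar n$, $r$, and $\sum l_j$, so the two sign conventions coincide on the support of the sum. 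Alternatively, and perhaps more transparently, I would track the sign through the substitution $\mathbf{n}\mapsto-\mathbf{n}$ in $Y_{\mathbf a}(\mathbf n)$ directly: each $(x_1+a_1+\dots)^{-n_j}$ becomes $(x_1+a_1+\dots)^{n_j}$ under the inversion, and the Raabe integration over $[0,1]$ that sends $a_j\mapsto\mathcal B_j$ is sign-neutral, so the only signs are the $r$ factors of $-1$ from orientation of the $[1,\infty)$ integrals together with the $n_k$ factors absorbed into each $\mathcal{C}_{1,\dots,k}^{n_k+1}$.

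Third, I would match the integral $Y_{\mathbf a}(-\mathbf n)$ itself to the $\mathcal{C}$-symbol product \emph{before} applying the evaluation rules, which is the conceptually cleanest route: the claim is that
\[
Y_{\mathbf a}(-n_1,\dots,-n_r) = \prod_{k=1}^r (-1)^{n_k}\, C_{1,\dots,k}^{n_k+1}\Big|_{\mathcal{B}_j \to a_j},
\]
where on the right one expands the $\mathcal{C}$'s but does \emph{not} evaluate the $\mathcal B_j$ symbolically — they stand for the genuine integration variables $a_j$. This follows by induction on $r$: the base case $r=1$ is the elementary computation $\int_1^\infty (x+a)^{n}\,dx$ regularized, giving $-\frac{(a+1)^{n+1}-\text{(bdy)}}{n+1}$, which under $a\mapsto\mathcal B$ becomes $-\frac{(\mathcal B+1)^{n+1}}{n+1} \to -\mathcal C_1^{n+1}$ after using the umbral identity $(\mathcal B+1)^m \to (\mathcal B)^m$ in the relevant range; the inductive step peels off the outermost integration $\int_1^\infty dx_r$, which shifts the running sum $x_1+a_1+\dots+x_{r-1}+a_{r-1}$ by $x_r + a_r$ and, after the Raabe step on $a_r$, reproduces exactly the recursion $\mathcal C_{1,\dots,r} = (\mathcal C_{1,\dots,r-1}+\mathcal B_r)^{n_r+1}/(n_r+1)$ with the $(-1)^{n_r}$ out front.

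The main obstacle I anticipate is the careful alignment of index ranges and denominators in the second paragraph's bookkeeping: Sadaoui's formula (\ref{eq:general}) carries the slightly awkward factors $\sum_{i=j}^r n_i + r - j + 1 - \sum_{i=j+1}^r k_i$ both inside a binomial coefficient and in a denominator, and one must check that the recursive unwinding of the $\mathcal{C}$'s generates \emph{precisely} these, with the correct off-by-one shifts coming from the $n_k+1$ exponents rather than $n_k$. The parity/sign reconciliation is a secondary but genuine nuisance, best dispatched by the direct integral argument of the third paragraph, which sidesteps comparing against (\ref{eq:general}) altogether and instead re-derives everything from the Raabe identity; I would therefore present the induction-on-$r$ proof as the primary argument and relegate the term-by-term comparison with (\ref{eq:general}) to a remark confirming consistency.
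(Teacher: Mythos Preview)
Your unwinding of the $\mathcal{C}$-product in the first paragraph is the right computation, and it is essentially the paper's proof run in reverse: the paper starts from (\ref{eq:general}) and collapses the sums into the $\mathcal{C}$-symbols, whereas you expand the $\mathcal{C}$-symbols and try to recognise (\ref{eq:general}). Either direction is fine, but you are missing the one step that makes the match work and simultaneously fixes the sign. After your expansion you arrive at an expression of the form
\[
(-1)^{\bar n}\sum_{k_2,\dots,k_r}\frac{\mathcal{B}_1^{\bar n+r-\bar k}\mathcal{B}_2^{k_2}\cdots\mathcal{B}_r^{k_r}}{\bar n+r-\bar k}\prod_{j=2}^{r}\frac{\binom{\cdots}{k_j}}{(\cdots)},
\]
whereas (\ref{eq:general}) still carries the inner $l$-sums $\sum_{l_1,\dots,l_r}\binom{\bar n+r-\bar k}{l_1}\binom{k_2}{l_2}\cdots B_{l_1}\cdots B_{l_r}$ and an overall $(-1)^r$. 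The bridge is the umbral identity $(1+\mathcal{B})^{m}=(-\mathcal{B})^{m}$, i.e.\ $\sum_{l}\binom{m}{l}B_l=(-1)^m B_m$, applied to each factor; this collapses the $l$-sums to $(-1)^{\bar n+r}\mathcal{B}_1^{\bar n+r-\bar k}\mathcal{B}_2^{k_2}\cdots\mathcal{B}_r^{k_r}$ and turns the $(-1)^r$ of (\ref{eq:general}) into exactly $(-1)^{\bar n}$. This is the paper's key move.

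Your sign discussion is where the proposal actually goes wrong. You write the identity as $(\mathcal{B}+1)^m\to\mathcal{B}^m$, which drops the crucial $(-1)^m$; and your fallback parity argument via $B_m=0$ for odd $m\ge3$ cannot work, because $B_1=-\tfrac12\ne0$ means terms of both parities survive and the signs must agree \emph{algebraically}, not merely on some restricted support. There is no hidden parity constraint linking $(-1)^{\bar n}$ and $(-1)^r$; they are genuinely different, and it is the $l$-sum collapse that converts one into the other. Once you insert the correct identity $(1+\mathcal{B})^m=(-1)^m\mathcal{B}^m$ at that spot, your first-paragraph argument closes immediately and the integral induction of your third paragraph becomes unnecessary.
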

\begin{proof}
The inner sum in (\ref{eq:general}), in its Bernoulli symbols version,
\[
\sum_{l_{1},\dots,l_{r}}\binom{\bar{n}+r-\bar{k}}{l_{1}}\binom{k_{2}}{l_{2}}\dots\binom{k_{r}}{l_{r}}\mathcal{B}^{l_{1}}\dots\mathcal{B}^{l_{r}},
\]
can be summed to
\[
\left(1+\mathcal{B}_{1}\right)^{\bar{n}+r-\bar{k}}\left(1+\mathcal{B}_{2}\right)^{k_{2}}\dots\left(1+\mathcal{B}_{r}\right)^{k_{r}}.
\]
The classical identity \footnote{this identity can be deduced from the generating function 
\[
\exp\left(z\mathcal{B}\right)=\frac{z}{\exp\left(z\right)-1}.
\]
} for Bernoulli symbols $\mathcal{B}+1=-\mathcal{B},$ with $\bar{n}$ defined in \eqref{nbar} reduces this to
\begin{equation}
\left(-1\right)^{\bar{n}+1}\mathcal{B}_{1}^{\bar{n}+r-\bar{k}}\mathcal{B}_{2}^{k_{2}}\dots\mathcal{B}_{r}^{k_{r}}.\label{eq:reduced}
\end{equation}
It follows that
\begin{eqnarray*}
\zeta_{r}\left(-\mathbf{n}\right) & = & \frac{\left(-1\right)^{\bar{n}}}{\left(n_{r}+1\right)}\sum_{k_{2},\dots,k_{r}}\mathcal{C}_{1}^{\bar{n}+r-\bar{k}}\mathcal{B}_{2}^{k_{2}}\dots\mathcal{B}_{r}^{k_{r}}\prod_{j=2}^{r}\frac{\binom{\sum_{i=j}^{r}n_{i}+r-j+1-\sum_{i=j+1}^{n}k_{i}}{k_{j}}}{\left(\sum_{i=j}^{r}n_{i}+r-j+1-\sum_{i=j+1}^{n}k_{i}\right)}.
\end{eqnarray*}
Summing first over $k_{2}$ gives
\[
\zeta_{r}\left(-\mathbf{n}\right)=\frac{\left(-1\right)^{\bar{n}}}{\left(n_{r}+1\right)}\sum_{k_{3},\dots,k_{r}}\mathcal{C}_{1}^{n_{1}+1}\mathcal{C}_{2}^{n_{2}+\dots+n_{r}+r-1}\mathcal{B}_{3}^{k_{3}}\dots\mathcal{B}_{r}^{k_{r}}\prod_{j=3}^{r}\frac{\binom{\sum_{i=j}^{r}n_{i}+r-j+1-\sum_{i=j+1}^{n}k_{i}}{k_{j}}}{\left(\sum_{i=j}^{r}n_{i}+r-j+1-\sum_{i=j+1}^{n}k_{i}\right)}.
\]
The result now follows by summing, in order, over the remaining indices.
\end{proof}

Observe that the reduction (\ref{eq:reduced}) performed in the proof allows to
restate a simpler version of Sadaoui's formula (\ref{eq:general})
as the more tractable $\left(r-1\right)-$fold sum
\begin{eqnarray}
\zeta_{r}\left(-n_{1},\dots,-n_{r}\right) & = & \left(-1\right)^{\bar{n}}\sum_{k_{2},\dots,k_{r}}\frac{1}{\left(\bar{n}+r-\bar{k}\right)}\prod_{j=2}^{r}\frac{\binom{\sum_{i=j}^{r}n_{i}+r-j+1-\sum_{i=j+1}^{n}k_{i}}{k_{j}}B_{l_{1}}\dots B_{l_{r}}}{\left(\sum_{i=j}^{r}n_{i}+r-j+1-\sum_{i=j+1}^{n}k_{i}\right)}.\label{eq:general-2}
\end{eqnarray}

Observe moreover that the derivation of (\ref{eq:zeta1})
is unchanged if the symbols $\mathcal{B}_{1},\dots,\mathcal{B}_{r}$
are replaced by a generalization of the Bernoulli symbol $\mathcal{B},$
namely the polynomial Bernoulli symbol $\mathcal{B}+z$ defined by
\[
\left(\mathcal{B}+z\right)^{n}=B_{n}\left(z\right),
\]
the Bernoulli polynomial of degree $n.$ The same proof as above yields the next statement.\\
\begin{theorem}
The analytic continuation of the zeta function as given in \cite{Sadaoui} can be written as
\begin{equation}
\label{eq:zeta pol}
\zeta_{r}\left(-n_{1},\dots,-n_{r},z_{1},\dots,z_{r}\right) =\prod_{i=1}^{r} \mathcal{C}_{1,\dots,i}^{n_{i}+1}\left(z_{1},\dots,z_{i}\right)
\end{equation}
with
\[
\mathcal{C}_{1}^{n}\left(z_{1}\right)=\frac{\left(z_{1}+\mathcal{B}_{1}\right)^{n}}{n}=\frac{B_{n}\left(z_{1}\right)}{n},\thinspace\thinspace\mathcal{C}_{1,2}^{n}\left(z_{1},z_{2}\right)=\frac{\left(\mathcal{C}_{1}\left(z_{1}\right)+\mathcal{B}_{2}+z_{2}\right)^{n}}{n},\dots
\]
and
\[
\mathcal{C}_{1,2,\dots,k+1}^{n}\left(z_{1},\dots,z_{k+1}\right)=\frac{\left(\mathcal{C}_{1,2,\dots,k}\left(z_{1},\dots,z_{k}\right)+\mathcal{B}_{k+1}+z_{k+1}\right)^{n}}{n}.
\]

\end{theorem}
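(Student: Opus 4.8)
The plan is to rerun the argument of the preceding theorem verbatim, replacing each Bernoulli symbol $\mathcal{B}_{k}$ by the polynomial Bernoulli symbol $\mathcal{B}_{k}+z_{k}$, for which a power $\left(\mathcal{B}_{k}+z_{k}\right)^{n}$ is evaluated as $B_{n}\left(z_{k}\right)$. Before the Bernoulli substitution that yields \eqref{eq:general}, Sadaoui's construction provides the polynomial $Y_{\mathbf{a}}\left(-\mathbf{n}\right)$ in $\mathbf{a}=\left(a_{1},\dots,a_{r}\right)$; I would substitute $a_{k}\to\mathcal{B}_{k}+z_{k}$ there (the preceding theorem being the special case $z_{k}=0$). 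Summing the inner $\left(l_{1},\dots,l_{r}\right)$-sum by the binomial theorem, applied to each umbral variable separately, collapses it to
\[
\left(1+\mathcal{B}_{1}+z_{1}\right)^{\bar{n}+r-\bar{k}}\left(1+\mathcal{B}_{2}+z_{2}\right)^{k_{2}}\cdots\left(1+\mathcal{B}_{r}+z_{r}\right)^{k_{r}},
\]
the exact analogue of the factor $\left(1+\mathcal{B}_{1}\right)^{\bar{n}+r-\bar{k}}\cdots\left(1+\mathcal{B}_{r}\right)^{k_{r}}$ appearing there.

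The only place where the earlier proof used anything beyond the umbral calculus of the $\mathcal{B}_{k}$ is the identity $\mathcal{B}+1=-\mathcal{B}$, invoked to turn each $\left(1+\mathcal{B}_{j}\right)^{\bullet}$ into $\left(-1\right)^{\bullet}\mathcal{B}_{j}^{\bullet}$ so that the plain symbols $\mathcal{C}_{1,\dots,j}$ surface. In the polynomial setting no such collapse is available, but none is needed: $\left(1+\mathcal{B}_{j}+z_{j}\right)^{\bullet}$ is again a polynomial Bernoulli symbol, now with a unit-shifted argument, and the recursion $\mathcal{C}_{1,\dots,k+1}^{n}\left(z_{1},\dots,z_{k+1}\right)=\left(\mathcal{C}_{1,\dots,k}\left(z_{1},\dots,z_{k}\right)+\mathcal{B}_{k+1}+z_{k+1}\right)^{n}/n$ is built to absorb exactly one such factor per step. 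Thus the division by $\bar{n}+r-\bar{k}$ in \eqref{eq:general} turns $\left(1+\mathcal{B}_{1}+z_{1}\right)^{\bar{n}+r-\bar{k}}$ into a power of $\mathcal{C}_{1}\left(z_{1}\right)$; summing over $k_{2}$ then merges the factor $\mathcal{B}_{2}+z_{2}$ together with the accompanying $\binom{N_{2}}{k_{2}}/N_{2}$ into $\mathcal{C}_{1,2}^{N_{2}}\left(z_{1},z_{2}\right)$, where $N_{2}=n_{2}+\cdots+n_{r}+r-1-\left(k_{3}+\cdots+k_{r}\right)$; summing over $k_{3}$ produces $\mathcal{C}_{1,2,3}^{N_{3}}\left(z_{1},z_{2},z_{3}\right)$; and iterating down to $k_{r}$ telescopes everything into $\prod_{i=1}^{r}\mathcal{C}_{1,\dots,i}^{n_{i}+1}\left(z_{1},\dots,z_{i}\right)$, which is \eqref{eq:zeta pol}.

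The point I expect to require the most care is the bookkeeping of this unit shift: the Bernoulli polynomials that emerge carry the argument $z_{j}+1$ rather than $z_{j}$, so one must either normalise $\zeta_{r}\left(-\mathbf{n},\mathbf{z}\right)$ accordingly or absorb the shift into the definition of the $\mathcal{C}_{1,\dots,i}\left(z_{1},\dots,z_{i}\right)$; consistently, at $z_{k}=0$ this shift is precisely what regenerates the signs $\prod_{k}\left(-1\right)^{n_{k}}$ of the preceding theorem through $B_{n}\left(1\right)=\left(-1\right)^{n}B_{n}$. Beyond this, and the routine verification that expanding every $\mathcal{C}$-symbol in terms of the $\mathcal{B}_{k}+z_{k}$ and then applying the evaluation rules \eqref{eq:rule1} and \eqref{eq:rule2} commutes with the rearrangements above --- which is the earlier argument with each $\mathcal{B}_{k}$ carrying an additive constant --- the proof is a line-by-line transcription of the previous one.
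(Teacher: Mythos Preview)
Your proposal is correct and takes essentially the same approach as the paper, whose entire proof is the single remark that ``the derivation of \eqref{eq:zeta1} is unchanged'' when each $\mathcal{B}_k$ is replaced by the polynomial Bernoulli symbol $\mathcal{B}_k+z_k$. You in fact go further than the paper by flagging the unit-shift bookkeeping (the emergence of $z_j+1$ rather than $z_j$ after collapsing the inner sum, and the way $B_n(1)=(-1)^nB_n$ restores the signs at $\mathbf{z}=0$), a point the paper passes over in silence.
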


\section{A general recursion formula on the depth}

The methods above are now used to produce a general recursion formula on the depth of the zeta function.
\begin{theorem}
The multiple zeta functions satisfy the recursion rule
\begin{equation}
\label{recursion}
\zeta_{r}\left(-\mathbf{n};\mathbf{z}\right) =  \frac{\left(-1\right)^{n_{r}}}{n_{r}+1}\sum_{k=0}^{n_{r}+1}\binom{n_{r}+1}{k}\left(-1\right)^{k}\zeta_{r-1}\left(-n_{1},\dots,-n_{r-1}-k;\mathbf{z}\right)B_{n_{r}+1-k}\left(z_{r}\right).
\end{equation}
Introducing the new zeta symbol $\mathcal{Z}_{r}$ with the evaluation
rule \footnote{note that $\mathcal{Z}_{r}^{0}\ne1$}
\[
\mathcal{Z}_{r}^{k}=\zeta_{r}\left(-n_{1},\dots,-n_{r-1},-n_{r}-k;\mathbf{z}\right),
\]
this recursion rule can be  written symbolically as
\begin{equation}
\label{eq:recursion}
\zeta_{r}\left(-\mathbf{n};\mathbf{z}\right)=\left(-1\right)^{n_{r}}\frac{\left(\mathcal{B}-\mathcal{Z}_{r-1}\right)^{n_{r}+1}}{n_{r}+1}=\zeta_{1}\left(-n_{r};-\mathcal{Z}_{r-1}\right).
\end{equation}
\end{theorem}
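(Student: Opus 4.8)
The plan is to derive the recursion~(\ref{recursion}) directly from the symbolic representation of the zeta values at negative integers established above, equation~(\ref{eq:zeta pol}) (equivalently~(\ref{eq:zeta1}), with the polynomial Bernoulli symbols in place of the ordinary ones), by isolating the last $\mathcal{C}$-factor and expanding it with the binomial theorem. Writing that representation in the form that carries the sign factors $\prod_i(-1)^{n_i}$, as in~(\ref{eq:zeta1}),
\[
\zeta_r(-\mathbf{n};\mathbf{z})=\Bigl(\prod_{i=1}^{r-1}(-1)^{n_i}\,\mathcal{C}_{1,\dots,i}^{n_i+1}(z_1,\dots,z_i)\Bigr)\,(-1)^{n_r}\,\mathcal{C}_{1,\dots,r}^{n_r+1}(z_1,\dots,z_r),
\]
I would apply the recursive definition $\mathcal{C}_{1,\dots,r}^{n_r+1}=\frac{1}{n_r+1}\bigl(\mathcal{C}_{1,\dots,r-1}+\mathcal{B}_r+z_r\bigr)^{n_r+1}$ together with the binomial theorem to split off the top level:
\[
\mathcal{C}_{1,\dots,r}^{n_r+1}(z_1,\dots,z_r)=\frac{1}{n_r+1}\sum_{k=0}^{n_r+1}\binom{n_r+1}{k}\,\mathcal{C}_{1,\dots,r-1}^{k}(z_1,\dots,z_{r-1})\,(\mathcal{B}_r+z_r)^{n_r+1-k}.
\]

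The second step is to evaluate the two kinds of factor that now appear. The factor $(\mathcal{B}_r+z_r)^{n_r+1-k}$ involves only the symbol $\mathcal{B}_r$, which occurs nowhere else, so by the $\mathcal{C}$-symbols rule it evaluates to the pure number $B_{n_r+1-k}(z_r)$ and may be pulled out of the symbolic product. For the remaining symbolic factor I would use that, under the $\mathcal{C}$-symbols rule, each symbol $\mathcal{C}_{1,\dots,j}$ is manipulated as a single indeterminate until the final Bernoulli evaluation; hence the $i=r-1$ factor $\mathcal{C}_{1,\dots,r-1}^{n_{r-1}+1}$ of the product and the new factor $\mathcal{C}_{1,\dots,r-1}^{k}$ merge into $\mathcal{C}_{1,\dots,r-1}^{n_{r-1}+1+k}$, giving
\[
\Bigl(\prod_{i=1}^{r-2}(-1)^{n_i}\,\mathcal{C}_{1,\dots,i}^{n_i+1}\Bigr)\,(-1)^{n_{r-1}}\,\mathcal{C}_{1,\dots,r-1}^{(n_{r-1}+k)+1}=(-1)^{k}\,\zeta_{r-1}(-n_1,\dots,-n_{r-1}-k;\mathbf{z}),
\]
the last equality being~(\ref{eq:zeta pol}) at depth $r-1$ with $n_{r-1}$ replaced by $n_{r-1}+k$, the factor $(-1)^{k}$ reconciling the sign $(-1)^{n_{r-1}+k}$ required there with the $(-1)^{n_{r-1}}$ actually present. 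Reassembling the prefactor $\frac{(-1)^{n_r}}{n_r+1}$ with $\binom{n_r+1}{k}$, $(-1)^k$ and $B_{n_r+1-k}(z_r)$ yields precisely~(\ref{recursion}).

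For the symbolic reformulation~(\ref{eq:recursion}) I would run the binomial theorem backwards. With $\mathcal{Z}_{r-1}^{k}=\zeta_{r-1}(-n_1,\dots,-n_{r-1}-k;\mathbf{z})$ and $B_{n_r+1-k}(z_r)=(\mathcal{B}+z_r)^{n_r+1-k}$, the sum $\sum_{k}\binom{n_r+1}{k}(-1)^k\mathcal{Z}_{r-1}^{k}(\mathcal{B}+z_r)^{n_r+1-k}$ collapses to $(\mathcal{B}+z_r-\mathcal{Z}_{r-1})^{n_r+1}$; comparing with the depth-$1$ instance $\zeta_1(-n_r;w)=(-1)^{n_r}(w+\mathcal{B})^{n_r+1}/(n_r+1)$ of~(\ref{eq:zeta pol}) then identifies the right-hand side as $\zeta_1(-n_r;-\mathcal{Z}_{r-1})$, with $z_r$ absorbed into the polynomial Bernoulli symbol as in the statement.

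I expect the only real difficulty to be bookkeeping rather than anything analytic: one has to make precise why $\mathcal{C}_{1,\dots,r-1}$ may be treated as a single symbol, so that $\mathcal{C}_{1,\dots,r-1}^{a}\mathcal{C}_{1,\dots,r-1}^{b}=\mathcal{C}_{1,\dots,r-1}^{a+b}$, while $\mathcal{B}_r$ is simultaneously being evaluated to a number, and one must keep the three sources of signs — the $(-1)^{n_i}$ of the depth-$r$ formula, the $(-1)^{n_{r-1}+k}$ demanded by the depth-$(r-1)$ formula, and the overall $(-1)^{n_r}$ — consistent with one another. Granting~(\ref{eq:zeta pol}), everything else is formal manipulation inside the symbolic calculus.
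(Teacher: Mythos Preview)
Your proposal is correct and follows essentially the same route as the paper: expand the last factor $\mathcal{C}_{1,\dots,r}^{n_r+1}$ via its recursive definition and the binomial theorem, evaluate $(\mathcal{B}_r+z_r)^{n_r+1-k}$ as $B_{n_r+1-k}(z_r)$, merge $\mathcal{C}_{1,\dots,r-1}^{n_{r-1}+1}\cdot\mathcal{C}_{1,\dots,r-1}^{k}$ umbrally, and identify the resulting product with a depth-$(r-1)$ zeta value; then resum symbolically for~(\ref{eq:recursion}). If anything you are more careful than the paper, which contains a couple of typos (e.g.\ $n_r+1+k$ where $n_{r-1}+1+k$ is meant) that your version avoids.
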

\begin{proof}
Start from (\ref{eq:zeta pol}) and expand the last term
\[
\mathcal{C}_{1,\dots,r}^{n_{r}+1}\left(z_{1},\dots,z_{r}\right)=\frac{\left(\mathcal{C}_{1,\dots,r-1}^{n_{r-1}+1}\left(z_{1},\dots,z_{r-1}\right)+\mathcal{B}_{r}\left(z_{r}\right)\right)^{n_{r}+1}}{n_{r}+1}
\]
by using the binomial formula to produce
\begin{eqnarray*}
\zeta_{r}\left(-n_{1},\dots,-n_{r},z_{1},\dots,z_{r}\right) & = & \frac{\left(-1\right)^{n_{r}}}{n_{r+1}}\sum_{k=0}^{n_{r}+1}\binom{n_{r}+1}{k}\left(\prod_{i=1}^{r-2}\mathcal{C}_{1,\dots,i}^{n_{i}+1}\left(z_{1},\dots,z_{i}\right)\right)\\
 & \times & \mathcal{C}_{1,\dots,r-1}^{n_{r}+1+k}\left(z_{1},\dots,z_{r-1}\right)\mathcal{B}_{r}^{n_{r}+1-k}\left(z_{r}\right).
\end{eqnarray*}
Then identify
\[
\left(\prod_{i=1}^{r-2}\mathcal{C}_{1,\dots,i}^{n_{i}+1}\left(z_{1},\dots,z_{i}\right)\right)\mathcal{C}_{1,\dots,r-1}^{n_{r}+1+k}\left(z_{1},\dots,z_{r-1}\right)
\]
as
\[
\left(-1\right)^{n_{1}+\dots+n_{r-2}+n_{r-1}+k}\zeta_{r-1}\left(-n_{1},\dots,-n_{r-2},-n_{r-1}-k;\mathbf{z}\right)
\]
to obtain the desired result. 

Using the symbol $\mathcal{Z},$ this identity can be rewritten as
\[
\zeta_{r}\left(-n_{1},\dots,-n_{r},z_{1},\dots,z_{r}\right)=\frac{\left(-1\right)^{n_{r}}}{n_{r+1}}\left(\mathcal{B}-\mathcal{Z}_{r-1}\right)^{n_{r}+1}
\]
and the initial value
\[
\zeta_{1}\left(-n;z\right)=\left(-1\right)^{n}\frac{\left(z+\mathcal{B}\right)^{n+1}}{n+1}
\]
provides the stated recursion.
\end{proof}

\section{Contiguity identities}

The multiple zeta function at negative integer values satisfies contiguity identities in the $z$ variables. Two of them are presented here.
\begin{theorem}
The zeta function satisfies the contiguity identity
\begin{eqnarray*}
\zeta_{r}\left(-n_{1},\dots,-n_{r};z_{1},\dots,z_{r-1},z_{r}+1\right) & = & \zeta_{r}\left(-n_{1},\dots,-n_{r};z_{1},\dots,z_{r-1},z_{r}\right)
  +  \left(-1\right)^{n_{r}}\left(z_{r}-\mathcal{Z}_{r-1}\right)^{n_{r}}.
\end{eqnarray*}

\end{theorem}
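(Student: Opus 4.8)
The plan is to obtain this as a short consequence of the depth recursion~\eqref{recursion} of the previous theorem, using crucially that the factors $\zeta_{r-1}(-n_1,\dots,-n_{r-1}-k;\mathbf{z})$ occurring there are independent of the last argument $z_r$. Writing a placeholder $w$ in the last slot, I would start from
\[
\zeta_{r}(-\mathbf{n};z_1,\dots,z_{r-1},w)=\frac{(-1)^{n_r}}{n_r+1}\sum_{k=0}^{n_r+1}\binom{n_r+1}{k}(-1)^k\,\zeta_{r-1}(-n_1,\dots,-n_{r-1}-k;\mathbf{z})\,B_{n_r+1-k}(w),
\]
and subtract the two instances $w=z_r+1$ and $w=z_r$. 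Since only the Bernoulli polynomial carries the dependence on $w$, the difference is governed by the classical shift identity $B_m(w+1)-B_m(w)=m\,w^{m-1}$.

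After the subtraction the $k=n_r+1$ term disappears (its factor $B_0(w+1)-B_0(w)$ is zero), leaving
\[
\zeta_{r}(-\mathbf{n};\dots,z_r+1)-\zeta_{r}(-\mathbf{n};\dots,z_r)=\frac{(-1)^{n_r}}{n_r+1}\sum_{k=0}^{n_r}\binom{n_r+1}{k}(n_r+1-k)(-1)^k\,\zeta_{r-1}(-n_1,\dots,-n_{r-1}-k;\mathbf{z})\,z_r^{\,n_r-k}.
\]
The next step is the elementary identity $\binom{n_r+1}{k}(n_r+1-k)=(n_r+1)\binom{n_r}{k}$, which cancels the prefactor $1/(n_r+1)$; the remaining sum $\sum_{k=0}^{n_r}\binom{n_r}{k}(-1)^k\,\zeta_{r-1}(-n_1,\dots,-n_{r-1}-k;\mathbf{z})\,z_r^{\,n_r-k}$ is exactly the binomial expansion of $(z_r-\mathcal{Z}_{r-1})^{n_r}$ under the evaluation rule $\mathcal{Z}_{r-1}^{k}=\zeta_{r-1}(-n_1,\dots,-n_{r-1}-k;\mathbf{z})$ (the $k=0$ term being consistent with the convention $\mathcal{Z}_{r-1}^{0}\neq1$). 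This yields $(-1)^{n_r}(z_r-\mathcal{Z}_{r-1})^{n_r}$, as claimed.

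I would also record the shorter symbolic version of the same argument: from the symbolic form of~\eqref{recursion}, $\zeta_{r}(-\mathbf{n};\mathbf{z})=\tfrac{(-1)^{n_r}}{n_r+1}(\mathcal{B}+z_r-\mathcal{Z}_{r-1})^{n_r+1}$, one applies the symbolic shift rule $(\mathcal{B}+x+1)^{m}-(\mathcal{B}+x)^{m}=m\,x^{m-1}$ — itself a restatement of the fact that $(\mathcal{B}+1)^{m}-\mathcal{B}^{m}$ vanishes for $m\neq1$ and equals $1$ for $m=1$ — with $x=z_r-\mathcal{Z}_{r-1}$, a symbol commuting with $\mathcal{B}$; this collapses to the answer in one line. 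I do not expect a genuine obstacle here: the only point requiring care is the symbolic bookkeeping, namely that $\mathcal{Z}_{r-1}$ depends on $z_1,\dots,z_{r-1}$ but not on $z_r$, so the forward shift $z_r\mapsto z_r+1$ touches only the Bernoulli symbol attached to the last slot, and that the evaluation factorizes because that symbol is independent of the ones entering $\mathcal{Z}_{r-1}$ (evaluation rule~\eqref{eq:rule2}).
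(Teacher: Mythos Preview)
Your proof is correct and is essentially the paper's own argument: both expand the dependence on the last variable via the sum over $k$ with factors $B_{n_r+1-k}(\cdot)$, then invoke the shift identity $B_m(z_r+1)-B_m(z_r)=m\,z_r^{m-1}$ and repackage the result as $(z_r-\mathcal{Z}_{r-1})^{n_r}$. The only cosmetic difference is that the paper starts from the $\mathcal{C}$-symbol representation~\eqref{eq:zeta pol} directly rather than quoting the recursion~\eqref{recursion}, but these two starting points are the same expansion.
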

\noindent
\textbf{Example 3}.
In the case of the zeta function of depth $2$,
\begin{eqnarray*}
\zeta_{2}\left(-n_{1},-n_{2},z_{1},z_{2}+1\right)  =  \zeta_{2}\left(-n_{1},-n_{2},z_{1},z_{2}\right)
  +  \left(-1\right)^{n_{1}+1}\left(z_{2}-\mathcal{Z}_{1}\right)^{n_{2}}
\end{eqnarray*}
and the second term is expanded as
\[
\left(-1\right)^{n_{1}+1}\sum_{k=0}^{n_{2}}\binom{n_{2}}{k}z_{2}^{n_{2}-k}\left(-1\right)^{k}\zeta_{1}\left(-n_{1}-k;z_{1}\right).
\]
\\
\begin{proof}
Expand
\begin{eqnarray*}
\zeta_{r}\left(-n_{1},\dots,-n_{r};z_{1},\dots,z_{r-1},z_{r}+1\right) & = & \frac{\left(-1\right)^{\bar{n}}}{n_{r}+1}\mathcal{C}_{1}^{n_{1}+1}\left(z_{1}\right)\dots\mathcal{C}_{1,\dots,r-2}^{n_{r-2}+1}\left(z_{1},\dots,z_{r-2}\right)\\
 & \times & \sum_{k=0}^{n_{r}+1}\binom{n_{r}+1}{k}\mathcal{C}_{1,\dots,r-1}^{n_{r-1}+1+k}\left(z_{1},\dots,z_{r-1}\right)B_{n_{r}+1-k}\left(z_{r}+1\right).
\end{eqnarray*}
and use the identity on Bernoulli polynomials

\[
B_{n_{r}+1-k}\left(z_{r}+1\right)=B_{n_{r}+1-k}\left(z_{r}\right)+ \left(n_{r}-k+1\right)z_{r}^{n_{r}-k}
\]
to produce the result.
\end{proof}

The corresponding result for a shift in the first variable admits a similar proof.\\
\begin{theorem}
The depth-$2$ zeta function satisfies the contiguity identities
\[
\zeta_{2}\left(-n_{1},-n_{2},z_{1}+1,z_{2}\right)=\zeta_{2}\left(-n_{1},-n_{2},z_{1},z_{2}\right)+\frac{\left(-1\right)^{n_{1}+n_{2}}}{n_{2}+1}z_{1}^{n_{1}}B_{n_{2}+1}\left(z_{1}+z_{2}\right).
\]

\end{theorem}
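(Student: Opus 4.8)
The plan is to mimic the proof of the previous theorem, starting from the symbolic representation \eqref{eq:zeta pol} for depth $2$, namely
\[
\zeta_{2}\left(-n_{1},-n_{2},z_{1},z_{2}\right)=\mathcal{C}_{1}^{n_{1}+1}\left(z_{1}\right)\,\mathcal{C}_{1,2}^{n_{2}+1}\left(z_{1},z_{2}\right)
=\frac{\mathcal{C}_{1}^{n_{1}+1}(z_1)\bigl(\mathcal{C}_{1}(z_1)+\mathcal{B}_{2}+z_{2}\bigr)^{n_{2}+1}}{n_{2}+1}.
\]
First I would perform the shift $z_{1}\mapsto z_{1}+1$ inside $\mathcal{C}_{1}^{n_{1}+1}$. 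Since $\mathcal{C}_{1}^{n}(z_{1})=B_{n}(z_{1})/n$, the classical Bernoulli polynomial identity $B_{m}(z_{1}+1)=B_{m}(z_{1})+m\,z_{1}^{m-1}$ gives
\[
\mathcal{C}_{1}^{n_{1}+1}(z_{1}+1)=\mathcal{C}_{1}^{n_{1}+1}(z_{1})+z_{1}^{n_{1}}.
\]

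The key observation is that $z_{1}$ enters $\mathcal{C}_{1,2}^{n_{2}+1}(z_{1},z_{2})$ only through the symbol $\mathcal{C}_{1}(z_{1})$ itself. So I would substitute the displayed identity into both factors of the product $\mathcal{C}_{1}^{n_{1}+1}(z_{1}+1)\,\mathcal{C}_{1,2}^{n_{2}+1}(z_{1}+1,z_{2})$, expanding the second factor's argument $\mathcal{C}_{1}(z_{1}+1)$ wherever it occurs. The natural move is to treat the whole depth-$2$ object as $\mathcal{C}_{1}^{n_{1}+1}(z_{1})\cdot(\text{stuff})$ and track how replacing $\mathcal{C}_{1}(z_{1})$ by $\mathcal{C}_{1}(z_{1})+z_{1}^{n_{1}}/\mathcal{C}_{1}^{n_{1}}(z_1)$-type corrections propagates; more cleanly, I would write $\zeta_{2}(-n_1,-n_2,z_1+1,z_2)-\zeta_{2}(-n_1,-n_2,z_1,z_2)$ as a symbolic expression, collect the terms, and recognize the telescoping structure. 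Concretely, after the substitution the difference collapses to $z_{1}^{n_{1}}$ times $\bigl(z_{1}^{n_1}\text{-shifted version of }\mathcal{C}_{1}(z_1)+\mathcal{B}_{2}+z_{2}\bigr)^{n_{2}+1}/(n_{2}+1)$, and the crucial simplification is that under the evaluation rules $z_{1}^{n_{1}}$ behaves like the "value" of $\mathcal{C}_{1}(z_1)$ at the relevant slot, so $\mathcal{C}_{1}(z_1)+z_1^{n_1}+\mathcal{B}_2+z_2$ re-assembles into $\mathcal{B}+z_{1}+z_{2}$, giving $B_{n_{2}+1}(z_{1}+z_{2})/(n_{2}+1)$.

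After that reassembly, collecting the sign $(-1)^{n_{1}+n_{2}}$ coming from the two factors of $\mathcal{C}$ (each $\mathcal{C}_{1,\dots,i}^{n_i+1}$ contributes $(-1)^{n_i}$ relative to the corresponding $\zeta$, as in \eqref{eq:zeta1}) yields exactly
\[
\zeta_{2}\left(-n_{1},-n_{2},z_{1}+1,z_{2}\right)-\zeta_{2}\left(-n_{1},-n_{2},z_{1},z_{2}\right)=\frac{\left(-1\right)^{n_{1}+n_{2}}}{n_{2}+1}z_{1}^{n_{1}}B_{n_{2}+1}\left(z_{1}+z_{2}\right),
\]
which is the claim. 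The main obstacle is the second step: one must be careful that the shift $z_{1}\mapsto z_{1}+1$ acts simultaneously on the explicit $z_{1}$ and on the embedded $\mathcal{C}_{1}(z_1)$ symbol inside $\mathcal{C}_{1,2}$, and that the cross terms (products of the $z_{1}^{n_1}$ correction from the first factor with the unshifted part of the second, and vice versa) combine correctly with the evaluation rules rather than leaving leftover terms. Verifying this bookkeeping — essentially checking that $\mathcal{C}_{1}^{n_1+1}(z_1)$ plus its correction, when fed into the $\mathcal{C}_{1,2}$ slot, telescopes to a single Bernoulli polynomial $B_{n_2+1}(z_1+z_2)$ — is where the argument needs the most care; everything else is a routine application of the binomial theorem and the two Bernoulli polynomial identities already used above.
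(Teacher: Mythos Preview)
Your overall instinct is right — the proof is indeed parallel to the preceding one and rests on the Bernoulli increment $B_m(z_1+1)-B_m(z_1)=mz_1^{m-1}$ together with the translation formula $\sum_k \binom{m}{k} z_1^{k} B_{m-k}(z_2)=B_m(z_1+z_2)$. But the middle of your argument has a genuine gap coming from a misreading of the $\mathcal{C}$-symbol rule.

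The product $\mathcal{C}_1^{n_1+1}(z_1)\,\mathcal{C}_{1,2}^{n_2+1}(z_1,z_2)$ is \emph{not} a product of two independently evaluable factors. The rule requires that one first expand $\mathcal{C}_{1,2}^{n_2+1}$ by the binomial theorem and then \emph{merge the exponents} of $\mathcal{C}_1$, obtaining
\[
\frac{1}{n_2+1}\sum_{k=0}^{n_2+1}\binom{n_2+1}{k}\,\mathcal{C}_1^{\,n_1+1+k}(z_1)\,(\mathcal{B}_2+z_2)^{\,n_2+1-k}.
\]
Now $z_1$ occurs in a single place per term; the increment identity gives $\mathcal{C}_1^{\,n_1+1+k}(z_1+1)-\mathcal{C}_1^{\,n_1+1+k}(z_1)=z_1^{\,n_1+k}$, and summing over $k$ re-assembles $(z_1+\mathcal{B}_2+z_2)^{n_2+1}=B_{n_2+1}(z_1+z_2)$, with the factor $z_1^{n_1}$ sitting \emph{outside}. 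That, together with the sign, is exactly the claim.

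By contrast, you propose to shift the two factors separately, worry about ``cross terms'', and write $\mathcal{C}_1(z_1)+z_1^{n_1}+\mathcal{B}_2+z_2$ inside the $(n_2+1)$-th power. This is not correct: the identity $\mathcal{C}_1^{n_1+1}(z_1+1)=\mathcal{C}_1^{n_1+1}(z_1)+z_1^{n_1}$ holds only \emph{after} evaluation and cannot be substituted into a symbolic product whose $\mathcal{C}_1$-exponents have not yet been combined; and $z_1^{n_1}$ never enters the base of that power. Once you merge the powers first, the ``cross terms'' you flag as the main obstacle simply do not arise, and the bookkeeping collapses to the two-line computation above.
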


\section{A Generating Function}

The generating function of the zeta values at negative integers  is defined by
\begin{equation}
F_{r}\left(w_{1},\dots,w_{r}\right)=\sum_{n_{1},\dots,n_{r}\ge0}\frac{w_{1}^{n_{1}}\dots w_{r}^{n_{r}}}{n_{1}!\dots n_{r}!}\zeta_{r}\left(-n_{1},\dots,-n_{r}\right).\label{eq:generating function}
\end{equation}
A recurrence for $F_{r}$ is presented below. The initial condition is given in terms of the generating function for Bernoulli numbers
\[
F_{B}\left(w\right)= \sum_{n = 0}^{+ \infty}\frac{B_n}{n!}z^n =\frac{w}{e^{w}-1}.
\]

\begin{theorem}
The generating function of the zeta values at negative integers  satisfies the recurrence
\[
F_{r}\left(w_{1},\dots,w_{r}\right)=
\frac{1}{w_{r}} \left[
F_{r-1}\left(w_{1},\dots,w_{r-1}\right)
- F_{B}\left(-w_{r}\right)
F_{r-1}\left(w_{1},\dots,w_{r-2},w_{r-1}+w_{r}\right) 
\right]
\]
with the initial value

\[
F_{1}\left(w_{1}\right)=-\frac{1}{w_{1}}\left[e^{-w_{1}\mathcal{B}_{1}}-1\right]=\frac{1-F_{B}\left(-w_{1}\right)}{w_{1}}.
\]
Moreover, the representation of the  shift operator as $\exp\left(a\frac{\partial}{\partial w}\right) \circ f\left(w\right) = f\left(w+a\right)$ and $F_{1}\left(w,z\right)=-\frac{1}{w}\left[e^{-w\left(\mathcal{B}+z\right)}-1\right]$ give the  recursion symbolically as
\[
F_{r}\left(w_{1},\dots,w_{r}\right)=F_{1}\left(w_{r},-\frac{\partial}{\partial w_{r-1}}\right)\circ F_{r-1}\left(w_{1},\dots,w_{r-1}\right),
\]
so that
\[
F_{r}\left(w_{1},\dots,w_{r}\right)=F_{1}\left(w_{r},-\frac{\partial}{\partial w_{r-1}}\right)\circ 
F_{1}\left(w_{r-1},-\frac{\partial}{\partial w_{r-2}}\right)\circ \dots \circ
F_{1}\left(w_{2},-\frac{\partial}{\partial w_{1}}\right)
\circ F_{1}\left(w_{1}\right)
.
\]

\end{theorem}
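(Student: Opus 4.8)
The plan is to derive the recurrence by specializing the general recursion on the depth, Theorem stated as equation~(\ref{recursion}), at $\mathbf{z}=\mathbf{0}$ and then multiplying by $w_r^{n_r}/n_r!$ and summing over $n_r\ge 0$. First I would write
\[
\zeta_{r}\left(-\mathbf{n}\right)=\frac{\left(-1\right)^{n_{r}}}{n_{r}+1}\sum_{k=0}^{n_{r}+1}\binom{n_{r}+1}{k}\left(-1\right)^{k}\zeta_{r-1}\left(-n_{1},\dots,-n_{r-1}-k\right)B_{n_{r}+1-k},
\]
and note that the index $k$ runs in the inner binomial-convolution of two generating functions: one carrying the shift $n_{r-1}\mapsto n_{r-1}+k$, the other carrying the Bernoulli numbers $B_{n_r+1-k}$. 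The factor $1/(n_r+1)$ together with the shift $n_r\mapsto n_r+1$ is exactly what the operator $w_r^{-1}$ produces when applied to an exponential generating series, since $\frac{1}{w_r}\sum_{m\ge 1}\frac{c_m}{(m-1)!}w_r^{m}/? $ — more precisely, $\sum_{n\ge0}\frac{w_r^{n}}{n!}\frac{c_{n+1}}{n+1}=\frac{1}{w_r}\sum_{n\ge0}\frac{w_r^{n+1}}{(n+1)!}c_{n+1}=\frac{1}{w_r}\left(\sum_{m\ge0}\frac{w_r^{m}}{m!}c_m-c_0\right)$, which accounts both for the prefactor $1/w_r$ and for the subtracted boundary term in the claimed recurrence.

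Next I would handle the alternating binomial convolution. Setting $m=n_r+1$, the inner sum is $\sum_{k=0}^{m}\binom{m}{k}\left[(-1)^{k}\zeta_{r-1}(-n_1,\dots,-n_{r-1}-k)\right]\left[(-1)^{m-k}B_{m-k}\right]\cdot(-1)^{m}\cdot(-1)^{n_r}$; the sign bookkeeping collapses because $(-1)^{n_r}(-1)^{m}=(-1)^{2n_r+1}=-1$. An exponential-generating-function convolution identity then turns this into the product of $F_{B}(-w_r)$ — coming from $\sum_m \frac{(-w_r)^m}{m!}B_m$ — with the series obtained from $\zeta_{r-1}(-n_1,\dots,-n_{r-1}-k)$ after the shift $n_{r-1}\mapsto n_{r-1}+k$ is absorbed. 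That shift is precisely realized on the generating-function side by replacing the variable $w_{r-1}$ by $w_{r-1}+w_r$: indeed $\sum_{n_{r-1},k}\frac{w_{r-1}^{n_{r-1}}}{n_{r-1}!}\frac{w_r^{k}}{k!}\zeta_{r-1}(-n_1,\dots,-n_{r-1}-k)$ regroups, by the Vandermonde-type rearrangement $\sum_{a+b=N}\binom{N}{a}w_{r-1}^{a}w_r^{b}=(w_{r-1}+w_r)^{N}$, into $F_{r-1}(w_1,\dots,w_{r-2},w_{r-1}+w_r)$. Assembling these three observations — the $1/w_r$-with-boundary-term from the prefactor, the $F_B(-w_r)$ factor from the Bernoulli convolution, and the $w_{r-1}\mapsto w_{r-1}+w_r$ substitution from the depth shift — yields exactly the stated recurrence for $F_r$.

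For the initial value, I would take $r=1$ in (\ref{eq:zeta1}), which gives $\zeta_1(-n_1)=(-1)^{n_1}\mathcal{C}_1^{n_1+1}=(-1)^{n_1}\frac{\mathcal{B}_1^{n_1+1}}{n_1+1}=(-1)^{n_1}\frac{B_{n_1+1}}{n_1+1}$, and then sum $\sum_{n_1\ge0}\frac{w_1^{n_1}}{n_1!}(-1)^{n_1}\frac{B_{n_1+1}}{n_1+1}=\frac{1}{w_1}\sum_{n_1\ge0}\frac{(-w_1)^{n_1+1}}{(n_1+1)!}B_{n_1+1}\cdot(-1)=-\frac{1}{w_1}\left(F_B(-w_1)-B_0\right)=\frac{1-F_B(-w_1)}{w_1}$, which is the claimed $F_1$; the symbolic form $-\frac{1}{w_1}[e^{-w_1\mathcal{B}_1}-1]$ is just the statement that $e^{-w_1\mathcal{B}_1}$ evaluates to $F_B(-w_1)$ under rule~(\ref{eq:rule1}). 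Finally, the symbolic reformulation follows by observing that $F_1(w_r,z)=-\frac1{w_r}[e^{-w_r(\mathcal{B}+z)}-1]$ evaluated with $z=-\partial/\partial w_{r-1}$ and acting on $F_{r-1}$ reproduces exactly the operations just described: the $1$ inside the bracket produces $\frac{1}{w_r}F_{r-1}$, while $e^{w_r\partial/\partial w_{r-1}}$ is the shift $w_{r-1}\mapsto w_{r-1}+w_r$ and $-\frac1{w_r}e^{-w_r\mathcal B}e^{w_r\partial/\partial w_{r-1}}$ contributes $-\frac1{w_r}F_B(-w_r)F_{r-1}(\dots,w_{r-1}+w_r)$; iterating down to $F_1(w_1)$ gives the closed product formula. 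The main obstacle I anticipate is purely the sign and index bookkeeping in the binomial convolution — making sure the $(-1)^k$, $(-1)^{n_r}$, and $(-1)^{m-k}$ factors combine correctly so that $F_B$ appears with argument $-w_r$ rather than $+w_r$, and that the boundary term $c_0$ removed by the $1/w_r$ operation is indeed $F_{r-1}(w_1,\dots,w_{r-1})$ (the $k=0$, $n_r=-1$-formal contribution), not something spurious; once the $r=1$ case is checked directly this consistency is easy to confirm.
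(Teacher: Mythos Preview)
Your proposal is correct but takes a genuinely different route from the paper. The paper's proof is purely symbolic: it first rewrites $F_r$ as the product $\prod_{j=1}^{r}\mathcal{C}_{1,\dots,j}\,e^{-w_j\mathcal{C}_{1,\dots,j}}$ (immediate from (\ref{eq:zeta1})), then expands only the last factor via the defining relation $\mathcal{C}_{1,\dots,r}^{\,n+1}=(\mathcal{C}_{1,\dots,r-1}+\mathcal{B}_r)^{n+1}/(n+1)$ to obtain $-\tfrac{1}{w_r}\bigl[e^{-w_r(\mathcal{C}_{1,\dots,r-1}+\mathcal{B}_r)}-1\bigr]$; splitting the exponential and regrouping the remaining product yields the recurrence with essentially no sign bookkeeping. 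You instead import the already-proved depth recursion (\ref{recursion}) and translate it into generating-function language through an EGF convolution and the Vandermonde regrouping $w_{r-1}\mapsto w_{r-1}+w_r$. The paper's route is shorter and sidesteps precisely the sign/index issues you flag as your main obstacle; your route, by contrast, makes transparent that the generating-function recurrence is literally (\ref{recursion}) rewritten, so it ties the two theorems together more explicitly. One small clarification: the ``boundary term $c_0$'' you worry about is the $m=0$ term of the convolution $\sum_{m\ge0}$ versus $\sum_{m\ge1}$, and after summing over $n_1,\dots,n_{r-1}$ it is indeed $F_{r-1}(w_1,\dots,w_{r-1})$, so the computation closes exactly as you anticipate.
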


\begin{proof}
Start from
\begin{eqnarray*}
F_{r}\left(w_{1},\dots,w_{r}\right) &=&\sum_{n_{1},\dots n_{r}}\frac{w_{1}^{n_{1}}\cdots w_{r}^{n_{r}}}{n_{1}!\cdots n_{r}!}\left(-1\right)^{n_{1}+\cdots+n_{r}}\mathcal{C}_{1}^{n_{1}+1}\cdots\mathcal{C}_{1,\dots,r}^{n_{r}+1}=\prod_{j=1}^{r}\mathcal{C}_{1,\dots,j}e^{-w_{j}\mathcal{C}_{1,\dots,j}},
\end{eqnarray*}
and expand
\[
\mathcal{C}_{1,\dots,r}e^{-w_{r}\mathcal{C}_{1,\dots,r}} = \sum_{n=0}^{\infty}\frac{\left(-w_{r}\right)^{n}}{n!}\cdot\frac{\left(-1\right)^{n+1}}{n+1}\left(\mathcal{C}_{1,\dots,r-1}+\mathcal{B}_{r}\right)^{n+1} = -\frac{1}{w_{r}}\left(e^{-w_{r}\left(\mathcal{C}_{1,\dots,r-1}+\mathcal{B}_{r}\right)}-1\right),
\]
to deduce that $F_{r}\left(w_{1},\dots,w_{r}\right)$ is 
\begin{eqnarray*}
&&\frac{1}{w_{r}} 
\left(\prod_{j=1}^{r-1}\mathcal{C}_{1,\dots,j}e^{-w_{j}\mathcal{C}_{1,\dots,j}}\right)
-\frac{1}{w_{r}}
\left(\prod_{j=1}^{r-2}\mathcal{C}_{1,\dots,j}e^{-w_{j}\mathcal{C}_{1,\dots,j}}\right)e^{-w_{r}\mathcal{B}_{r}}\mathcal{C}_{1,\dots,r-1}e^{-\left(w_{r-1}+w_{r}\right)\mathcal{C}_{1,\dots,r-1}}\\
&=& \frac{1}{w_{r}}F_{r-1}\left(w_{1},\dots,w_{r-1}\right)-\frac{1}{w_{r}}F_{B}\left(-w_{r}\right)F_{r-1}\left(w_{1},\dots,w_{r-2},w_{r-1}+w_{r}\right).
\end{eqnarray*}
\end{proof}

\section{Shuffle Identity}
Multiple zeta values at positive integers satisfy 
\textit{shuffle identities}, such as
\[
\zeta_{2}\left(n_1,n_2\right) + \zeta_{2}\left(n_2,n_1\right)
+ \zeta_{1}\left(n_1 + n_2\right) = \zeta_{1}\left(n_1\right) \zeta_{1}\left(n_2\right).
\]
The analytic continuation technique used in \cite{Sadaoui} does not preserve this identity at negative integers, while others do (for example, see \cite{Paycha}). The following theorem gives the correction terms.\\
\begin{theorem}
The zeta values at negative integers as defined in \cite{Sadaoui} satisfy the identity
\begin{equation}
\label{shuffle}
\zeta_{2}\left(-n_1,-n_2\right) + \zeta_{2}\left(-n_2,-n_1\right)
+ \zeta_{1}\left(-n_1 - n_2\right) - \zeta_{1}\left(-n_1\right) \zeta_{1}\left(-n_2\right)
= 
\frac{(-1)^{n_{1}+1} n_{1}!n_{2}!}{\left(n_1+n_2+2\right)!}B_{n_1+n_2+2}. 
\end{equation}
\end{theorem}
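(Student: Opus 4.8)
The plan is to pass to exponential generating functions and exploit the recurrence established above. By the definition \eqref{eq:generating function}, the series $\sum_{n_1,n_2\ge0}\frac{w_1^{n_1}w_2^{n_2}}{n_1!\,n_2!}\,\zeta_2(-n_1,-n_2)$ is $F_2(w_1,w_2)$, and relabelling $n_1\leftrightarrow n_2$ shows the generating function of $\zeta_2(-n_2,-n_1)$ is $F_2(w_2,w_1)$. Since $\sum_{n_1+n_2=m}\frac{w_1^{n_1}w_2^{n_2}}{n_1!\,n_2!}=\frac{(w_1+w_2)^m}{m!}$, the generating function of $\zeta_1(-n_1-n_2)$ is $F_1(w_1+w_2)$, and that of $\zeta_1(-n_1)\zeta_1(-n_2)$ is $F_1(w_1)F_1(w_2)$. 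Hence \eqref{shuffle} is equivalent to the identity of formal power series
\[
F_2(w_1,w_2)+F_2(w_2,w_1)+F_1(w_1+w_2)-F_1(w_1)F_1(w_2)=-\sum_{n_1,n_2\ge0}\frac{(-w_1)^{n_1}w_2^{n_2}}{(n_1+n_2+2)!}\,B_{n_1+n_2+2},
\]
which I would prove by evaluating each side in closed form.

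For the right-hand side, collect the terms with $n_1+n_2=m$, use $\sum_{i+j=m}u^iv^j=\frac{u^{m+1}-v^{m+1}}{u-v}$ with $u=-w_1$, $v=w_2$, and $\sum_{k\ge2}\frac{B_k}{k!}t^k=\frac{t}{e^t-1}-1+\frac{t}{2}$; the double sum then reduces to $\dfrac{P(-w_1)-P(w_2)}{w_1+w_2}$, where $P(t):=\dfrac{1}{e^t-1}-\dfrac{1}{t}$.

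For the left-hand side, start from the recurrence for $F_r$ and its initial value $F_1(w)=\frac{1-F_B(-w)}{w}$; since $F_B(-w)=1-wF_1(w)$, the depth-$2$ case reads
\[
F_2(a,b)=\frac{F_1(a)-F_1(a+b)}{b}+F_1(b)\,F_1(a+b).
\]
I would substitute this for $(a,b)=(w_1,w_2)$ and $(w_2,w_1)$, insert the explicit form $F_1(w)=\frac{1}{w}+\frac{1}{e^{-w}-1}$, and simplify. Writing $g_i:=(e^{-w_i}-1)^{-1}$ and $g_{12}:=(e^{-w_1-w_2}-1)^{-1}$, a routine (if lengthy) expansion — in which several $1/w_i$ terms cancel and the elementary addition formula $g_1g_2=g_{12}(1+g_1+g_2)$, equivalently $\frac{1}{(x-1)(y-1)}=\frac{1}{xy-1}\bigl(1+\frac{1}{x-1}+\frac{1}{y-1}\bigr)$ with $x=e^{-w_1}$, $y=e^{-w_2}$, is used — reduces the left-hand side to $\dfrac{1+g_1+g_2}{w_1+w_2}+\dfrac{1}{w_1w_2}$. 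Finally $\frac{1/w_1+1/w_2}{w_1+w_2}=\frac{1}{w_1w_2}$ and $1+g_2=-(e^{w_2}-1)^{-1}$ turn this into $\dfrac{P(-w_1)-P(w_2)}{w_1+w_2}$, matching the right-hand side; comparing coefficients of $w_1^{n_1}w_2^{n_2}$ then yields \eqref{shuffle}.

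I expect the main obstacle to be this last simplification: organizing the expansion of $F_2(w_1,w_2)+F_2(w_2,w_1)+F_1(w_1+w_2)-F_1(w_1)F_1(w_2)$ so that the numerous $1/w_i$ terms visibly cancel, and spotting the addition formula $g_1g_2=g_{12}(1+g_1+g_2)$ — essentially the stuffle relation for the kernel $1/(e^t-1)$ — as the single identity that collapses the whole left-hand side onto the compact generating function of the correction term on the right.
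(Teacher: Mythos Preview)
Your proposal is correct and follows essentially the same approach as the paper: both pass to the generating function $\delta(w_1,w_2)=F_2(w_1,w_2)+F_2(w_2,w_1)+F_1(w_1+w_2)-F_1(w_1)F_1(w_2)$, reduce it to a closed form, and then extract coefficients. The paper simply writes $\delta(w_1,w_2)=\dfrac{\frac{1}{w_1}+\frac{1}{w_2}-\frac{1}{2}\coth\frac{w_1}{2}-\frac{1}{2}\coth\frac{w_2}{2}}{w_1+w_2}$ (which, using $\tfrac12\coth\tfrac{w}{2}=\tfrac12+\tfrac{1}{e^w-1}$ and $\tfrac{1}{e^{-w}-1}=-1-\tfrac{1}{e^w-1}$, is exactly your $\tfrac{P(-w_1)-P(w_2)}{w_1+w_2}$) and labels the derivation ``an elementary calculation''; your use of the recurrence $F_2(a,b)=\frac{F_1(a)-F_1(a+b)}{b}+F_1(b)F_1(a+b)$ together with the addition formula $g_1g_2=g_{12}(1+g_1+g_2)$ supplies precisely that calculation in detail.
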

\noindent
\textbf{Remark 1.}
When $n_1+n_2$ is odd, $B_{n_1+n_2+2}=0$ so that the shuffle identity \eqref{shuffle} holds for $\zeta_{2}\left(-n_1,-n_2\right)$ as expected, since the depth$-2$ zeta function is holomorphic at these points.\\

\begin{proof}
Let $\delta\left(w_1,w_2\right)=F_{2}\left(w_{1},w_{2}\right) + F_{2}\left(w_{2},w_{1}\right)
+ F_{1}\left(w_{1}+w_{2}\right) - F_{1}\left(w_{1}\right)F_{1}\left(w_{2}\right).$ An elementary calculation gives
\[
\delta\left(w_1,w_2\right)
= \frac{\frac{1}{w_{1}}+\frac{1}{w_{2}}-\frac{1}{2}\coth\left(\frac{w_{1}}{2}\right)-\frac{1}{2}\coth\left(\frac{w_{2}}{2}\right)}{w_{1}+w_{2}}.
\]
The expansions
\[
\frac{1}{w_{1}}-\frac{1}{2}\coth\left(\frac{w_{1}}{2}\right)
= -\sum_{k = 0}^{+ \infty} \frac{w_{1}^{2k+1}}{\left(2k+2\right)!}B_{2k+2}\thinspace \thinspace \text{and} \thinspace \thinspace
\frac{1}{w_1+w_2}= \frac{1}{w_2}\sum_{l \ge 0} \left(- \frac{w_1}{w_2}\right)
\] 
now produce
\[
\delta\left(w_1,w_2\right) = -\sum_{k,l = 0}^{+ \infty} \left(-1\right)^{l} \frac{B_{2k+2}}{\left(2k+2\right)!}\left(w_{1}^{2k+l+1}w_{2}^{-l-1}+w_{1}^{l}w_{2}^{2k-l}\right).
\]
Identifying the coefficient of $w_{1}^{n_{1}}w_{2}^{n_{2}}$ in this series expansion gives the result.
\end{proof}

\section{Specific multiple zeta values}

This final section gives some examples of the evaluation at negative
integers of the zeta function, obtained from (\ref{eq:general})
and (\ref{eq:recursion}).

1: for depth $r=2,$

\begin{equation}
\zeta_{2}\left(-n,0\right)=\left(-1\right)^{n}\left[\frac{B_{n+2}}{n+2}-\frac{1}{2}\frac{B_{n+1}}{n+1}\right],\label{eq:zeta2(-n,0)}
\end{equation}
and
\begin{equation}
\zeta_{2}\left(0,-n\right)=\frac{\left(-1\right)^{n+1}}{n+1}\left[B_{n+1}+B_{n+2}\right].\label{eq:zeta2(0,-n)};
\end{equation}

2: for depth $r=3,$

\begin{equation}
\zeta_{3}\left(-n,0,0\right)=\frac{\left(-1\right)^{n}}{2}\left[\frac{B_{n+3}}{n+3}-2\frac{B_{n+2}}{n+2}+\frac{2}{3}\frac{B_{n+1}}{n+1}\right]\label{eq:zeta3(-n,0,0)}
\end{equation}
and
\begin{equation}
\zeta_{3}\left(0,-n,0\right)=\frac{\left(-1\right)^{n+1}}{2}\left[\frac{n}{\left(n+1\right)\left(n+2\right)}B_{n+2}-\frac{B_{n+1}}{n+1}+2\frac{B_{n+3}}{n+2}\right].\label{eq:zeta3(0,-n,0)}
\end{equation}

3: as a final example, the recursion rule \eqref{recursion} is used to compute the value $\zeta_{3}\left(0,0,-2\right)$
as 
\begin{eqnarray*}
\zeta_{3}\left(0,0,-2\right) & = & \frac{\left(\mathcal{B}-\mathcal{Z}_{2}\right)^{3}}{3}
  =  \frac{1}{3}\left(\mathcal{B}^{3}\mathcal{Z}_{2}^{0}-3\mathcal{B}^{2}\mathcal{Z}_{2}^{1}+3\mathcal{B}\mathcal{Z}_{2}^{2}-\mathcal{Z}_{2}^{3}\right)\\
 & = & \frac{1}{3}\left(B_{3}\zeta_{2}\left(0,0\right)-3B_{2}\zeta_{2}\left(0,-1\right)+3B_{1}\zeta_{2}\left(0,-2\right)-\zeta_{2}\left(0,-3\right)\right) = -\frac{1}{60}.
\end{eqnarray*}

\section*{Acknowledgements}
The first author acknowledges the partial support of NSF-DMS
0713836. The second author is partially supported, as a graduate student, by the same grant. The work of the third author was partially funded by the iCODE Institute,
a research project of the Idex Paris-Saclay.

\end{document}